\DeclareMathSymbol{\twoheadrightarrow}  {\mathrel}{AMSa}{"10}
        \def\GG{{\mathcal G}}
\def\Q{{\mathbb Q}}
\def\Z{{\mathbb Z}}
\def\C{{\mathbb C}}
\def\W{{\mathbb W}}
\def\D{{\mathbb D}}
\def\RR{{\mathbb R}}
\def\F{{\mathbb F}}
\def\P{{\mathbb P}}
\def\f{{\tilde F}}
                     \def\f0{{\mathfrak f}}
\def\A8{{\mathbf A}_8}
\def\RR{{\mathfrak R}}
\def\Perm{\mathrm{Perm}}
\def\Gal{\mathrm{Gal}}
\def\End{\mathrm{End}}
\def\Aut{\mathrm{Aut}}
                  \def\cl{\mathrm{cl}}
\def\ST{{\mathbf S}}
\def\A{\mathbf{A}}
\def\dim{\mathrm{dim}}
\def\Oc{{\mathcal O}}
\newtheorem{thm}{Theorem}[section]
\newtheorem{lem}[thm]{Lemma}
\theoremstyle{definition}
\newtheorem{defn}[thm]{Definition}
\newtheorem{ex}[thm]{Example}
\newtheorem{rem}[thm]{Remark}
\newtheorem{rems}[thm]{Remarks}
        \newtheorem{sect}[thm]{}
\title[Absolutely simple Prymians]{Absolutely simple Prymians of trigonal curves}
\author[Yuri G.\ Zarhin]{Yuri G.\ Zarhin}
\address{Department of Mathematics, Pennsylvania State University,
University Park, PA 16802, USA}
\address{Institute for Mathematical Problems in Biology, Russian Academy of
Sciences, Pushchino, Moscow Region, Russia}
\email{zarhin\char`\@math.psu.edu}
\dedicatory{In Memoriam of Vasilii Alekseevich Iskovskikh}
\begin{document}
\maketitle

\section{Introduction}
As usual, $\Z,\Q,\C$ denote the ring of integers, the field of rational numbers
and the field of complex numbers respectively.  Let us fix a primitive cube
root of unity $\zeta_3=\frac{-1+\sqrt{-3}}{2}\in \C$. Let
$\Q(\zeta_3)=\Q(\sqrt{-3})$ be the third cyclotomic field and
$\Z[\zeta_3]=\Z+\Z\cdot \zeta_3$ its ring of integers. We write $\lambda$ for
the (principal) maximal ideal $(1-\zeta_3)\cdot\Z[\zeta_3]$ of $\Z[\zeta_3]$.

It is known \cite[Th. 5 on p. 176]{Shimura2} (see also \cite{Oort}) that for all positive integers $m$
different from $2$ there exists a $m$-dimensional complex abelian variety, whose
endomorphism ring is $\Z[\zeta_3]$. Shimura's proof is purely complex-analytic
 and not constructive; roughly speaking it deals with points of the corresponding
 moduli space that do not belong to a countable union of subvarieties of positive
 codimension. In this paper we discuss a geometric approach to an explicit construction
 of those abelian varieties via jacobians, prymians and Galois theory.

In order to explain our approach, let us start with the following definitions.
Let $f(x)\in\C[x]$ be a polynomial of degree $n\ge 4$ without multiple roots.
Let $C_{f,3}$ be a smooth projective model of the smooth affine curve
$y^3=f(x)$. It is well known (\cite{Koo}, pp. 401-402, \cite{Towse}, Prop. 1 on
p. 3359, \cite{Poonen}, p. 148) that the genus $g(C_{f,3})$  of $C_{f,3}$ is
$n-1$ if $3$ does not divide $n$ and $n-2$ if it does. In both cases
$g(C_{f,3})\ge 3$ is {\sl not} congruent to $2$ modulo $3$.

The map $(x,y)\mapsto (x,\zeta_3 y)$ gives rise to a non-trivial birational
automorphism $\delta_3:C_{f,3}\to C_{f,3}$ of period $3$. By functoriality,
$\delta_3$ induces the linear operator in the space of differentials of the
first kind
$$\delta_3^{*}: \Omega^1(C_{f,3}) \to \Omega^1(C_{f,3}).$$
Its spectrum consists of eigenvalues $\zeta_3^{-1}$ and $\zeta_3$; if $3$ does
not divide $n$ then their multiplicities are $[n/3]$ and $[2n/3]$ respectively
\cite{ZarhinCamb}.

 Let $J(C_{f,3})$ be the jacobian of $C_{f,3}$; it is an abelian
variety, whose dimension equals $g(C_{f,3})$. We write $\End(J(C_{f,3}))$ for
the ring of endomorphisms of $J(C_{f,3})$. By Albanese functoriality, $\delta_3$
induces an automorphism of $J(C_{f,3})$ which we still denote by $\delta_3$; it
is known ([11], p. 149, [14], p. 448) that $\delta_3^2+\delta_3+1=0$ in
$\End(J(C_{f,3})$. This gives us an embedding
$$\Z[\zeta_3]\cong \Z[\delta_3]\subset \End(J(C_{f,3})), \ \zeta_3 \mapsto \delta_3$$
(\cite[p. 149]{Poonen}, [14], \cite[p. 448]{Schaefer}).

If $f(x)$ is an odd polynomial of odd degree $n$ then $C_{f,3}$ admits the
involution
$$\delta_2:C_{f,3} \to C_{f,3}, (x,y) \mapsto (-x,-y),$$
which commutes with $\delta_3$. (It has exactly two fixed points if $n$ is not
divisible by $3$.) By Albanese functoriality, $\delta_2$ induces an
automorphism of $J(C_{f,3})$ which we still denote by $\delta_2$ and which
(still) commutes with the automorphism $\delta_3$ of $J(C_{f,3})$. We have
$\delta_2^2=1$ in $\End(J(C_{f,3}))$.

 Let $K$ be a subfield of $\C$ that contains $\sqrt{-3}$ and all
coefficients of $f(x)$, i.e.,
$$f(x)\in K[x]\subset \C[x].$$
Let $\RR_f\subset \C$ be the set of roots of $f(x)$ and $K(\RR_f)$ the
splitting field of $f(x)$ over $K$. Clearly, $K(\RR_f)$ is a finite Galois
extension of $K$. We write $\Gal(f)$ for the (finite) Galois group
$\Gal(K(\RR_f)/K)$. One may view $\Gal(f)$ as a certain permutation subgroup of
the group $\Perm(\RR_f)$ of  permutations of $\RR_f$. If we (choose an order on
$\RR_f$, i.e.,) denote  the roots of $f(x)$ by $\{\alpha_1, \dots , \alpha_n\}$
then we get a group isomorphism between $\Perm(\RR_f)$ and the full symmetric
group $\ST_n$ and $\Gal(f)$ becomes a certain subgroup of $\ST_n$.

It is proven in \cite{ZarhinCrelle,ZarhinMZ} that if $\Gal(f)=\ST_n$ then
$$\End(J(C_{f,3}))=\Z[\delta_3]\cong \Z[\zeta_3].$$
In particular, this allowed us to construct explicitly $g$-dimensional
principally polarized abelian varieties  with endomorphism ring $\Z[\zeta_3]$
for all $g\ge 3$ under an assumption that $3$ does not
divide $g-2$: for example, if  $f(x)=x^{g+1}-x-1$ then $J(C_{f,3})$ is a
$g$-dimensional principally polarized abelian variety with
$\End(J(C_{f,3}))=\Z[\zeta_3]$.  (It is known \cite[p. 42]{SerreGalois} that
the Galois group of the polynomial $x^n-x-1$ over $\Q$ is $\ST_n$ for all
positive integers $n$.)

The aim of this paper is to provide an {\sl explicit} construction of
$m$-dimensional principally polarized abelian varieties, whose endomorphism
ring is $\Z[\zeta_3]$ and $m \ge 5$ is an odd integer that is congruent to $2$
modulo $3$. We construct those abelian varieties (using odd $f(x)$ of degree
$n=2m+1$) as the anti-invariant part of $J(C_{f,3})$ (Prym variety)   with respect
to $\delta_2$, assuming that  $\Gal(f)$ coincides with the Weyl group
$\W(\D_m)$ of the root system
 $\D_m$ in the following sense. Since $f(x)$ is  odd and without multiple roots, there exist
$m$ distinct non-zero roots $\{\beta_1, \dots , \beta_m\}$ of $f(x)$ such that
($\beta_i \ne \pm \beta_j$ if $i\ne j$ and)
$\RR_f$  coincides with the set $\{0\} \cup \{\pm \beta_1, \dots , \pm \beta_m
\}\subset \bar{K}$. Then $\W(\D_m)$ may be defined as
 the group of  permutations of $\RR_f$ of the form
 $$0\mapsto 0, \ \beta_i \mapsto \epsilon_i \beta_{s(i)}, \ -\beta_i \mapsto -\epsilon_i \beta_{s(i)}$$
 where $s \in \ST_m$ is an arbitrary permutation on $m$ letters and signs
$\epsilon_i=\pm 1$ satisfy the condition $\prod_{i=1}^m \epsilon_i =1$. Let us
consider the $m$-dimensional $\F_3$-vector space of {\sl odd} functions
$$V_f^{-}:=\{\phi:\RR_f \to \F_3\mid \phi(-\alpha)=-\phi(\alpha) \ \forall
\alpha\in \RR_f\}$$ provided with the natural structure of Galois module.

Our main result is the following statement.

\begin{thm}
\label{main0} Suppose that $k$ is a nonnegative integer and $m=6k+5\ge 5$. Suppose that $n=2m+1=12k+11$
and $f(x)$ is an odd polynomial of degree $n$ without multiple roots. Then:

\begin{itemize}
\item[(i)]
\begin{enumerate}
\item
\begin{itemize}
\item[(A)]
$P(C_{f,3}):=(1-\delta_2)J(C_{f,3})$ is an $m$-dimensional $\delta_3$-invariant
abelian subvariety in $J(C_{f,3})$. In particular, the embedding
$\Z[\delta_3]\subset \End(J(C_{f,3}))$ induces the embedding
$$\Z[\zeta_3]\cong \Z[\delta_3]\hookrightarrow \End(P(C_{f,3})).$$
%%%%%%%%%%%%%%%%%%%%%%%%%%%%%%%%%%
\item[(B)]
If one restrict the canonical principal polarization on $J(C_{f,3})$ to
$P(C_{f,3})$ then the induced polarization is twice a principal polarization on
$P(C_{f,3})$ and this principal polarization is $\delta_3$-invariant.
\item[(C)]
The principally
polarized abelian variety $P(C_{f,3})$ is not isomorphic to the canonically
polarized jacobian of a smooth projective curve.
%%%%%%%%%%%%%%%%%%%%%%%%%%%%%%%%%%%%%%%%%
\end{itemize}

\item By functoriality, $\delta_3$ induces the linear operator
$$\delta_{3,P}^{*}: \Omega^1(P(C_{f,3})) \to \Omega^1(P(C_{f,3}))$$
in the space of differentials of the first kind on $P(C_{f,3})$. Its spectrum
consists of eigenvalues $\zeta_3^{-1}$ of multiplicity $2k+1$ and $\zeta_3$  of
multiplicity $4k+4$.
\end{enumerate}
 \item[(ii)] Suppose that $K$ is a subfield
of $\C$ that contains $\sqrt{-3}$ and all coefficients of $f(x)$. Then:
\begin{itemize}
\item[(a)]
 The abelian variety $P(C_{f,3})$ and its automorphism $\delta_3$ are defined over
$K$. In addition, the Galois submodule $P(C_{f,3})^{\delta_3}$ of
$\delta_3$-invariants of $P(C_{f,3})(\bar{K})$ is canonically isomorphic to
 $V_f^{-}$.
  \item[(b)]
 Assume
additionally that $\Gal(f)$ coincides with $\W(\D_m)$. Then:
\begin{itemize}
\item[(b1)] $\End(P(C_{f,3}))=\Z[\zeta_3]$. In particular, $P(C_{f,3})$ is an
absolutely simple abelian variety.
 \item[(b2)] The abelian variety
$P(C_{f,3})$ is isomorphic neither to the jacobian of a smooth projective curve
nor to a product of jacobians of smooth projective curves (even if one ignore
polarizations).
\end{itemize}
\end{itemize}
\end{itemize}
\end{thm}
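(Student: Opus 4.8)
The plan is to analyze the Galois module structure of $V_f^{-}$ and translate it into information about $\End(P(C_{f,3}))$ via the standard dictionary between endomorphisms and Galois-equivariant maps on torsion points. The abelian variety $P = P(C_{f,3})$ carries an action of $\Z[\zeta_3]$, so $P[\lambda]$ is an $\F_3$-vector space on which $\delta_3$ acts; by part (ii)(a), the subspace of $\delta_3$-invariants $P^{\delta_3}$ is canonically $V_f^{-}$ as a $\Gal(f)$-module, and by part (i)(2) the eigenvalue multiplicities force $P[\lambda]$ to be an extension controlled by $V_f^{-}$ and its twist. The first step is therefore to show that, as a $\W(\D_m)$-module over $\F_3$, $V_f^{-}$ is absolutely simple — i.e. $\End_{\W(\D_m)}(V_f^{-}) = \F_3$ and this persists after any extension of scalars. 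Concretely, $V_f^{-}$ is the "sign-permutation" module of dimension $m = 6k+5$: a basis is indexed by the pairs $\{\pm\beta_i\}$, with $s\in\ST_m$ permuting basis vectors and the sign vector $(\epsilon_i)$ acting by $\pm 1$ subject to $\prod\epsilon_i = 1$. Because $m$ is odd, $-1$ does not lie in the index-two subgroup $\{\prod\epsilon_i=1\}$ of $(\Z/2)^m$, which is exactly the arithmetic reason the construction needs $m$ odd.

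For the irreducibility step I would argue as follows. The group of sign changes $E = \{(\epsilon_i): \prod\epsilon_i=1\}$ acts diagonally on $V_f^{-}\otimes\bar{\F}_3$; its characters are parametrized by subsets $S\subseteq\{1,\dots,m\}$ taken modulo complementation (since a single relation $\prod\epsilon_i=1$ is imposed), and the corresponding weight spaces are the spans of individual basis vectors. The symmetric group $\ST_m$ permutes these weight lines transitively, so any nonzero $\W(\D_m)$-submodule, being a sum of weight spaces stable under $\ST_m$, is either $0$ or everything — provided the weight lines are genuinely distinct, which holds because $3\nmid 2$ so the mod-$3$ reduction does not collapse the two cosets $S$, $S^c$. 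Here the hypothesis $m = 6k+5$ enters to guarantee $3\nmid m$ (so the trace-zero hyperplane issue does not arise) and to pin down the multiplicities $2k+1$, $4k+4$ in (i)(2), whose coprimality to each other and appropriate parities I will use to rule out the exceptional isogeny possibilities. Having absolute irreducibility of $V_f^{-}$, I then invoke the now-standard argument (as in \cite{ZarhinCrelle,ZarhinMZ}): $\End^0(P) := \End(P)\otimes\Q$ is a division algebra over $\Q(\zeta_3)$ whose reduction acts faithfully and $\Gal(f)$-equivariantly on $P[\lambda]$; absolute simplicity of the Galois module forces this division algebra to be $\Q(\zeta_3)$ itself, and a short argument with $\lambda$-adic Tate modules (the $\Z[\zeta_3]$-module $T_\lambda P$ is free of rank one, hence its endomorphism ring over $\Z[\zeta_3]$ is maximal) upgrades this to $\End(P) = \Z[\zeta_3]$, giving (b1) and absolute simplicity.

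For (b2), the point is that $P$ is absolutely simple of dimension $m = 6k+5 \ge 5$ with $\End(P) = \Z[\zeta_3]$, whereas the signature of the $\Z[\zeta_3]$-action on $\Omega^1(P)$ computed in (i)(2) is $(2k+1, 4k+4)$, which is \emph{not} of the form $(a,a)$ — the two eigenvalue multiplicities differ. Combining absolute simplicity with part (i)(C), which already records that the principally polarized $P$ is not a jacobian, one needs only to exclude $P$ being isomorphic (as an unpolarized abelian variety) to a jacobian or a product of jacobians: a product is ruled out immediately by absolute simplicity, and for a single jacobian one uses the dichotomy — either such an isomorphism is incompatible with the polarization degrees recorded in (i)(B) (twice a principal polarization restricts from $J(C_{f,3})$), yielding a contradiction with the fact that on a jacobian every polarization is divisible in the expected way only for the canonical one, or one re-runs the argument of (i)(C) combined with the constraint on the $\delta_3$-signature, which no curve of genus $m$ admitting a $\Z[\zeta_3]$-action can match unless $m\equiv 0,1\pmod 3$. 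I expect the main obstacle to be the second half of (b2): passing from "not a polarized jacobian" to "not an unpolarized jacobian" requires controlling all principal polarizations on $P$, and here the smallness of $\End(P) = \Z[\zeta_3]$ is precisely what makes this tractable, since the set of principal polarizations up to isomorphism is then governed by the narrow class group of $\Z[\zeta_3]$, which is trivial.
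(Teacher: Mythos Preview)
Your overall strategy for (ii)(b) coincides with the paper's: show $\End_{\Gal(K)}(V_f^{-})=\F_3$, identify $V_f^{-}$ with $P(C_{f,3})_\lambda$, and invoke the criterion of \cite{ZarhinMZ} together with the coprimality of the multiplicities $2k+1$ and $4k+4$ from (i)(2). Your route to (b2)---absolute simplicity kills products, and $\End(P)=\Z[\zeta_3]$ forces the principal polarization to be unique, reducing the unpolarized question to the polarized one already handled in (i)(1)(C)---is exactly the paper's argument.

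Where you genuinely diverge is in proving $\End_{\W(\D_m)}(V_f^{-})=\F_3$. The paper embeds $V_f^{-}\cong W_h^{-}$ as a direct summand of the permutation module $\F_3^{\RR_h}$, splits the latter as $W_h^{-}\oplus\F_3\cdot 1_{\RR_h}\oplus W_h^{+,0}$, and shows $\dim_{\F_3}\End_{\Gal(h)}(\F_3^{\RR_h})=3$ by counting orbits of a point-stabilizer (namely $\{\beta_1\}$, $\{-\beta_1\}$, and the rest), so each summand contributes exactly one dimension. Your weight-space argument under the abelian normal subgroup $2^{m-1}\cdot\{1\}$ is more direct for this particular module and actually yields the stronger conclusion of absolute irreducibility; the paper's orbit count, by contrast, works uniformly for $\Gal(h)=2^{m-1}\cdot\GG$ with any doubly transitive $\GG\subset\ST_m$, which is the generality of Theorem~\ref{main}.

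Two small slips to correct. First, $P[\lambda]=\ker(1-\delta_3)$ \emph{is} $P^{\delta_3}$, so $\delta_3$ acts trivially there and $P[\lambda]\cong V_f^{-}$ on the nose---there is no further extension by a twist to analyze. Second, $T_\lambda P$ is free of rank $m=2\dim(P)/[\Q(\zeta_3):\Q]$ over $\Z[\zeta_3]_\lambda$, not rank one; but you do not need this step anyway, since once $\End^0(P)=\Q(\zeta_3)$ the inclusion $\Z[\zeta_3]\hookrightarrow\End(P)$ is an equality simply because $\Z[\zeta_3]$ is the maximal order.
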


\begin{ex}
\label{elkin} Let $m=5$ and $f(x):= x(x^{10} - x^2 -1)$. One may check (see
Example \ref{magma} below) that the Galois group of $f(x)$ over
$K=\Q(\sqrt{-3})$ is $\W(\D_5)$ . It follows that
$$\End(P(C_{f,3}))=\Z[\zeta_3].$$
\end{ex}

\begin{rem}
If $m=5$ then the $5$-dimensional Prym varieties $P(C_{f,3})$ appear as
intermediate jacobians of certain cubic threefolds \cite{MT}. (See also
\cite{ZarhinMF}.)
\end{rem}

\begin{rem}
A complete list of those (generalized) Prym varieties that are isomorphic, as
principally polarized abelian varieties, to jacobians of smooth projective
curves or to products of them was given by V.V. Shokurov
\cite{SlavaInv,SlavaIzv}. In the course of the proof of Theorem
\ref{main0}(i)(1)(C)) we use a different approach based on the study of the action of
the period $3$ automorphism on the differentials of the first kind
\cite{ZarhinMF}.
\end{rem}

The paper is organized as follows. In Section \ref{group} we discuss permutation modules
 related to Galois groups of odd polynomials. In Section \ref{geom} we study trigonal
 jacobians and prymians and prove the main result.

I am grateful to V.V. Shokurov for useful discussions. My special thanks go to Dr.
Arsen Elkin,  who did computations  with MAGMA related to Example \ref{magma}.

\section{Galois groups of odd polynomials and permutation modules}
\label{group}

Let $K$ be a field of characteristic zero, $\bar{K}$  its algebraic closure and
$\Gal(K)=\Aut(\bar{K}/K)$ its absolute Galois group. Let $\gamma\in K$ be a
primitive cube root of unity.

\begin{sect} {\bf Galois groups of odd polynomials}.
Let $n=12k+11$ be a positive integer that is congruent to $11$ modulo $12$,
$f(x)\in K[x]$ a degree $n$ {\sl odd} polynomial {\sl without multiple roots}
and with {\sl non-zero} constant term. Let us put $m=6k+5$. Then there exist
$m$ distinct non-zero roots $\{\beta_1, \dots , \beta_m\}$ of $f(x)$ such that
the $n$-element set $\RR_f$ of roots of $f(x)$ coincides with $\{0\} \cup \{\pm
\beta_1, \dots , \pm \beta_m \}\subset \bar{K}$ of all roots of $f(x)$.
Clearly, $\RR_f$ is Galois-stable. We write $\Perm(\RR_f)$ for the group of
permutations of the $n$-element set $\RR_f$. Let $\Gal(f)$ be the image of
$\Gal(K)$ in $\Perm(\RR_f)$. If $K(\RR_f)$ is the splitting field of $f(x)$ obtained by
adjoining to $K$ all elements of $\RR_f$ then $K(\RR_f)/K$ is a finite Galois
extension and  $\Gal(f)$ is canonically isomorphic to the Galois group
$\Gal(K(\RR_f)/K)$. Let $\Perm_0(\RR_f)$ be the subgroup of $\Perm(\RR_f)$ that
consists of all permutations of the form
 $$0\mapsto 0, \ \beta_i \mapsto \epsilon_i \beta_{s(i)}, \ -\beta_i \mapsto -\epsilon_i \beta_{s(i)}$$
 where $s \in \ST_m$ is an arbitrary permutation on $m$ letters and
$\epsilon_i=\pm 1$. Clearly,
$$\Gal(f)\subset \Perm_0(\RR_f) \subset \Perm(\RR_f).$$
We write $\W(\D_m)$ for the index $2$ subgroup of $\Perm_0(\RR_f)$, whose elements are characterized by
the condition
$\prod_{i=1}^m \epsilon_i =1$. We have
$$\W(\D_m)\subset \Perm_0(\RR_f) \subset \Perm(\RR_f).$$
\end{sect}

Since $0$ is a simple root of $f(x)$ we have $f(x)=x\cdot  h (x)$ where $h(x)$
is an {\sl even} polynomial of even degree $2m$, whose set of roots $\RR_{h}$
is $\{\pm \beta_1, \dots , \pm \beta_m\}$; in particular, $h(0)\ne 0$.

\begin{rem}
 Clearly,  $(\prod_{i=1}^m \beta_i)^2 = -h(0)$ (recall that
 $m$ is odd).
It follows  that $\Gal(f)=\Gal(h)\subset \W(\D_m)$ if and only if $-h(0)$
is a square in $K$.
\end{rem}

\begin{ex}
\label{magma} Let $$m=5, h(x)=x^{10}-x^2-1 \in \Q[x], f(x)=x \cdot h(x)= x
(x^{10}-x^2-1).$$ Since $1=-h(0)$ is a square in $\Q$, the Galois group
$\Gal(h/\Q)$ of $h(x)$ over $\Q$ is a subgroup of $\W(\D_5)$. Using Magma
\cite{MAGMA}, one obtains that the order of $\Gal(h/\Q)$ is $2^4 \cdot 5!$.
Since the order of $\W(\D_5)$ is also $2^4 \cdot 5!$, we conclude that
$\Gal(h/\Q)=\W(\D_5)$. One may check that the derived (sub)group
$G_1:=(\W(\D_5),\W(\D_5))$ is a {\sl perfect} (normal) subgroup of index $2$ in
$\W(\D_5)$. It follows that the splitting field $\Q(\RR_h)$ of $h(x)$ over $\Q$
contains exactly one quadratic subfield. In order to determine this subfield,
notice that if $\{\pm \beta_1, \dots , \pm \beta_5\}$ is the set of roots of
$h(x)=x^{10}-x^2-1$ then $\{\beta_1^2, \dots \beta_5^2\}$ is the set of roots
of $x^5-x-1$. Using Magma \cite{MAGMA}, one obtains that the discriminant of
$x^5-x-1$ is $19 \times 151$ and therefore $\Q(\RR_h)$ contains $\Q(\sqrt{19
\times 151})$. It follows that $\Q(\RR_h)$ does {\sl not} contain
$K=\Q(\sqrt{-3})$ and therefore $\Q(\RR_h)$ and $K$ are linearly disjoint over
$\Q$. This implies that the Galois group of $h(x)$ over $K$ also coincides with
$\W(\D_5)$ and therefore the Galois group of $f(x)$ over $K$ also  coincides
with $\W(\D_5)$.
\end{ex}

\begin{defn}
Let $\Perm(\RR_h)$ be the group of permutations of the $2m$-element set
$\RR_h$.
 Let $\GG$ be a permutation subgroup in $\ST_m$. We write $2^m \cdot \GG\subset \Perm(\RR_h)$ for
 the subgroup of all permutations of
the form
$$(s; \epsilon_1, \dots , \epsilon_m):
\beta_i \mapsto \epsilon_i \beta_{s(i)}, \ -\beta_i \mapsto -\epsilon_i
\beta_{s(i)}$$
 where $$s \in \GG, \ \epsilon_i=\pm 1 .$$ We write $2^{m-1} \cdot \GG$ for
 the index two subgroup in $2^m \cdot \GG$, whose elements are characterized by the
 condition $\prod_{i=1}^m \epsilon_i =1$.
\end{defn}

\begin{ex}
\item[(i)] The group $2^m \cdot \{1\}$ coincides with the group of all
permutations of the form
$$\beta_i \mapsto \epsilon_i \beta_{i}, \ -\beta_i \mapsto -\epsilon_i
\beta_{i}$$  where $ \epsilon_i=\pm 1$ while $2^{m-1} \cdot \{1\}$ corresponds
to its index $2$ subgroup, whose elements are characterized by the condition
 $\prod_{i=1}^m \epsilon_i =1$. The groups $2^m \cdot \{1\}$ and  $2^{m-1} \cdot
 \{1\}$ are exponent $2$ commutative groups of order $2^m$ and $2^{m-1}$
 respectively.
 \item[(ii)]
 Let us identify $\Perm(\RR_h)$ with the stabilizer of $0$ in $\Perm(\RR_f)$. Then
$2^m\cdot \ST_m$ coincides with $\Perm_0(\RR_f)$ and $2^{m-1}\cdot \ST_m$
coincides with $\W(\D_m)$.
\end{ex}

\begin{rem}
Clearly, the natural map $(s; \epsilon_1, \dots , \epsilon_m)\mapsto s$ give
rise to the surjective group homomorphisms
$$\kappa^{0}_h: 2^m \cdot \GG \twoheadrightarrow \GG, \ \kappa_h: 2^{m-1} \cdot \GG \twoheadrightarrow
\GG,$$ whose kernels are $2^m \cdot \{1\}$ and $2^{m-1} \cdot \{1\}$
 respectively.
\end{rem}

\begin{rems}
\label{orbits} Suppose that there exists a  permutation group $\GG \subset
\ST_m$ such that $\Gal(h) =2^{m-1} \cdot \GG$.
 Then:

\begin{itemize}
\item[(i)] The kernel of the surjective group homomorphism $\kappa_h:
\Gal(h)=2^{m-1} \cdot \GG  \twoheadrightarrow \GG$  is the commutative (normal sub)group
 $2^{m-1} \cdot \{1\}$ of exponent $2$.
 \item[(ii)]
 Let $G_1$ be a {\sl normal} subgroup in $\Gal(h)$ of {\sl odd} index say, $r$. Then
 $G_1$ contains $2^{m-1} \cdot \{1\}$ and the surjectivity of $\kappa_h$ implies that
 $$\kappa_h(G_1) \cong G_1/(2^{m-1} \cdot \{1\})$$ is a normal subgroup of index $r$ in $\GG$. This
 implies that if $\GG$ does not contain a normal subgroup of odd index (except
 $\GG$ itself) then $\Gal(h)$ also does not contain a normal subgroup of odd index (except
 $\Gal(h)$ itself).
\item[(iii)]
\begin{enumerate}
\item If $\GG$ is a transitive subgroup of $\ST_m$ then $2^{m-1} \cdot \GG$ is
a transitive subgroup of $\Perm(\RR_h)$. This means that $\Gal(h)$ is a
transitive subgroup of $\Perm(\RR_h)$, i.e., $h(x)$ is irreducible over $K$.

 \item
 Suppose that $\GG$ is a doubly transitive subgroup of $\ST_m$ and let
$\GG_1$ is the stabilizer of $1$ in $\GG$. Then $\GG_1$ has exactly two orbits
in $\{1, \dots , m\}$: namely, $\{1\}$ and the rest.
 Let $\Gal(h)_1$ be the stabilizer of
$\beta_1$ in $\Gal(h)$. Then one may easily check that $\Gal(h)_1$ has exactly $3$ orbits in
$\RR_h$: namely, $\{\beta_1\}$, $\{-\beta_1\}$ and the rest.
\end{enumerate}
\end{itemize}
\end{rems}

\begin{sect}
{\bf Permutation modules}. Let $V_{{f}}$ be the
 $2m$-dimensional $\F_3$-vector space of functions $$\phi:{\RR}_{f}\to
 \F_3, \ \sum_{\alpha\in {\RR}_{{f}}}\phi(\alpha)=0.$$
 The space $V_{{f}}$ carries the natural structure of Galois module
 induced by the Galois action on ${\RR}_{{f}}$.

 Let $\F_3^{\RR_h}$ be the
$2m$-dimensional $\F_3$-vector space of all functions $\phi:{\RR}_{h}\to
 \F_3$.
  It carries the natural structure of Galois module. We write $1_{\RR_h}$ for the
  (Galois-invariant) constant function $1$.

  The map that assigns
 to a $\F_3$-valued function on $\RR_f$ its restriction to $\RR_h$ gives rise to
 the isomorphism $V_f \to \F_3^{\RR_h}$ of Galois modules. (One may extend a
 function $\phi$ on $\RR_h$ to $\RR_f=\{0\}\cup \RR_h$ by putting
 $$\phi(0):=-\sum_{\alpha\in \RR_h}\phi(\alpha).)$$
 The Galois module $V_{{f}}$ splits into a
direct sum of the Galois submodules of odd and even functions
$$V_{{f}}=V_{{f}}^{-}\oplus V_{{f}}^{+}$$
where
$$V_{{f}}^{+}=\{\phi:{\RR}_{f}\to \F_3, \sum_{\alpha \in \RR_f} \phi(\alpha)=0, \ \phi(\alpha)= \phi(-\alpha) \ \forall \alpha\},$$
$$V_{{f}}^{-}=\{\phi:{\RR}_{f}\to \F_3, \ \phi(\alpha)= -\phi(\alpha) \ \forall \alpha\}.$$
(The sum of values of an odd function is always zero.) Clearly,
 $\phi(0)=0$ for all $\phi \in V_{{f}}^{-}$. It follows that
$$\dim_{\F_3}(V_{{f}}^{-})=m.$$
\end{sect}

\begin{lem}
\label{centralizer} Suppose that  there exists a doubly transitive permutation
group $\GG \subset \ST_m$ such that $\Gal(h)=2^{m-1}\cdot\GG$.
 Then $\End_{\Gal(K)}(V_{{f}}^{-})=\F_3$.
\end{lem}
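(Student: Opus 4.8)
The plan is to compute the endomorphism algebra of $V_f^-$ as a module over $\Gal(K)$, which acts through the finite quotient $\Gal(h)=2^{m-1}\cdot\GG$; so it suffices to show $\End_{2^{m-1}\cdot\GG}(V_f^-)=\F_3$. First I would identify $V_f^-$ concretely as a permutation-type module: an odd function $\phi$ on $\RR_f=\{0\}\cup\{\pm\beta_i\}$ is determined by its values $\phi(\beta_1),\dots,\phi(\beta_m)\in\F_3$ (with $\phi(0)=0$ forced), so as an $\F_3$-vector space $V_f^-\cong\F_3^m$ with basis the ``indicator-type'' functions $e_i$ supported on $\{\beta_i,-\beta_i\}$ with $e_i(\beta_i)=1$, $e_i(-\beta_i)=-1$. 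The action of an element $(s;\epsilon_1,\dots,\epsilon_m)$ sends $e_i\mapsto\epsilon_i e_{s(i)}$. In other words $V_f^-$ is the ``twisted permutation module'': the monomial representation of $2^{m-1}\cdot\GG$ on $\F_3^m$ where $\GG$ permutes coordinates and the sign part acts diagonally by the $\epsilon_i$.

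The key step is to exploit the normal subgroup $N:=2^{m-1}\cdot\{1\}$, the kernel of $\kappa_h$, which has exponent $2$ and hence order prime to $3$; so by Maschke $V_f^-$ is a semisimple $\F_3[N]$-module, and $\End_{\Gal(K)}(V_f^-)\subset\End_N(V_f^-)$. I would decompose $V_f^-$ under $N$ using the characters of the abelian $2$-group $N$. Here $N$ consists of sign vectors $(\epsilon_1,\dots,\epsilon_m)$ with $\prod\epsilon_i=1$, acting on $e_i$ by scaling by $\epsilon_i$; since the $\epsilon_i$ values all equal $\pm1=\pm1$ in $\F_3^\times$ and $N$ already ``sees'' all coordinate sign patterns with even total sign, the lines $\F_3 e_i$ are pairwise inequivalent $N$-submodules — the character of $N$ on $\F_3 e_i$ is $(\epsilon_1,\dots,\epsilon_m)\mapsto\epsilon_i$, and these $m$ characters are distinct because any two coordinates are separated by some even-sign pattern (as $m\ge2$). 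Therefore $V_f^-=\bigoplus_{i=1}^m \F_3 e_i$ is a decomposition into $m$ pairwise non-isomorphic simple $\F_3[N]$-modules, so $\End_N(V_f^-)\cong\F_3^m$ (diagonal matrices in the $e_i$-basis).

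Now I would bring in the $\GG$-action: an element $\varphi\in\End_{\Gal(K)}(V_f^-)$ lies in $\End_N(V_f^-)$, hence is diagonal, $\varphi(e_i)=c_i e_i$ with $c_i\in\F_3$; commuting with a lift of $s\in\GG$ (which sends $e_i\mapsto\pm e_{s(i)}$) forces $c_{s(i)}=c_i$ for all $i$ and all $s\in\GG$. Since $\GG$ is transitive on $\{1,\dots,m\}$ (doubly transitive in particular), all $c_i$ are equal, so $\varphi$ is a scalar. Hence $\End_{\Gal(K)}(V_f^-)=\F_3$. I expect the main subtlety to be verifying that the $m$ characters of $N$ on the lines $\F_3 e_i$ are genuinely pairwise distinct despite the constraint $\prod\epsilon_i=1$ in $N$ (this is where full transitivity of $\GG$, or just $m\ge2$, is used and why the ``$\D_m$'' rather than full ``$\BB_m$'' sign group still suffices); everything else is routine representation theory over $\F_3$ with $|N|$ coprime to $3$. (Note the hypothesis only needs transitivity of $\GG$ for the final scalar argument; double transitivity is presumably invoked elsewhere, e.g.\ in Remarks~\ref{orbits}(iii).)
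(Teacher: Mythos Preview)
Your argument is correct and takes a genuinely different route from the paper's proof. The paper embeds $V_f^{-}$ (identified with the odd functions $W_h^{-}$ on $\RR_h$) inside the full permutation module $\F_3^{\RR_h}$, splits off the constants and the even trace-zero piece $W_h^{+,0}$, and then invokes an orbit-counting lemma (Passman, Lemma~7.1) to show $\dim_{\F_3}\End_{\Gal(h)}(\F_3^{\RR_h})=3$, forcing $\End(W_h^{-})$ to be one-dimensional. That orbit count genuinely uses the double transitivity of $\GG$: it is what guarantees the stabilizer of $\beta_1$ in $\Gal(h)$ has exactly the three orbits $\{\beta_1\}$, $\{-\beta_1\}$, and $\RR_h\setminus\{\pm\beta_1\}$ (Remark~\ref{orbits}(iii)(2)). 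Your approach is instead Clifford-theoretic and intrinsic to $V_f^{-}$: restrict to the normal abelian $2$-subgroup $N=2^{m-1}\cdot\{1\}$, observe that the lines $\F_3 e_i$ carry pairwise distinct $N$-characters so that $\End_N(V_f^{-})$ is diagonal, and then let transitivity of $\GG$ collapse the diagonal to scalars. A pleasant by-product, which you correctly flag, is that mere transitivity of $\GG$ already yields the conclusion; double transitivity is used only elsewhere in the paper. One small correction: the characters $\chi_i(\epsilon)=\epsilon_i$ are pairwise distinct on $N$ only for $m\ge 3$ (when $m=2$ the constraint $\epsilon_1\epsilon_2=1$ forces $\chi_1=\chi_2$ on $N$), not $m\ge 2$ as you write; since $m=6k+5\ge 5$ throughout, this is harmless.
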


\begin{proof}
By Remark \ref{orbits}(iii)(1), $\Gal(h)$ acts transitively on $\RR_h$.

 Let $W_h^{+}$ and $W_h^{-}$ be the subspaces of even and odd
functions respectively in $\F_3^{\RR_h}$. Clearly, they both are Galois
submodules in $\F_3^{\RR_h}$ and $$W_h^{-}\oplus W_h^{+} =\F_3^{\RR_h}.$$ It
is also clear that the Galois modules $W_h^{-}$ and $V_{{f}}^{-}$ are
isomorphic. So, it suffices to check that
$$\End_{\Gal(K)}(W_{{h}}^{-})=\F_3.$$
In order to do that, notice that $\#(\RR_h)=2m=n-1=12k+10$ is {\sl not}
divisible by $3$. This implies that the submodule $\F_3 \cdot 1_{\RR_h}$ of constant functions is a
direct summand of $W_h^{+}$ and $\F_3^{\RR_h}$ splits into a direct sum of
Galois modules
$$\F_3^{\RR_h}=W_h^{-}\oplus W_h^{+}=W_h^{-}\oplus \F_3 \cdot 1_{\RR_h}
\oplus W_h^{+,0}$$ where $W_h^{+,0}$ is the Galois (sub)module of even
functions, whose sum of values is zero. Clearly,
$$\dim_{\F_3}\End_{\Gal(K)} (\F_3^{\RR_h})\ge $$
$$\dim_{\F_3}\End_{\Gal(K)}(W_h^{-}) + \dim_{\F_3}\End_{\Gal(K)}(\F_3 \cdot
1_{\RR_h}) + \dim_{\F_3}\End_{\Gal(K)}(W_h^{+,0}) \ge $$
$$\dim_{\F_3}\End_{\Gal(K)}(W_h^{-}) +1 +1.$$ So, if we prove that
$\dim_{\F_3}\End_{\Gal(K)} (\F_3^{\RR_h})=3$ then we are done. Since the image
of $\Gal(K)$ in $\Aut_{\F_3}(\F_3^{\RR_h})$ coincides with
$$\Gal(h)\subset \Perm(\RR_h)\subset \Aut_{\F_3}(\F_3^{\RR_h}),$$
we have
$$\End_{\Gal(K)} (\F_3^{\RR_h})=\End_{\Gal(h)} (\F_3^{\RR_h}).$$
So, in order to prove the Lemma, it suffices to check that
$$\dim_{\F_3}(\End_{\Gal(h)} (\F_3^{\RR_h}))=3.$$
By Lemma 7.1 of \cite{Passman}, $\dim_{\F_3}(\End_{\Gal(h)} (\F_3^{\RR_h}))$
coincides with the number of orbits in $\RR_h$ of the stabilizer  in $\Gal(h)$
of any root of $h(x)$. But the number of orbits is $3$ (see Remark
\ref{orbits}(iii)(2)). This ends the proof.
\end{proof}

\section{Cyclic covers, jacobians and prymians}
\label{geom}

If $X$ is an abelian variety over $\bar{K}$ then we write $\End(X)$ for the
ring of its $\bar{K}$-endomorphisms and $\End^0(X)$ for the
corresponding $\Q$-algebra $\End(X)\otimes\Q$. If $X$ is defined over $K$ then
we write $\End_K(X)$ for the ring of its $K$-endomorphisms.

As above $f(x)=x\cdot h(x)\in K[x]$ is an odd polynomial of degree $n=2m+1=12k+11$
without multiple roots. We keep all the notation of the previous Section.

\begin{sect}
{\bf Trigonal curves}. Hereafter we assume that $K$  contains $\sqrt{-3}$. Let
us consider (the smooth projective model of) the trigonal curve
$$C_{f,3}: y^3=f(x)$$
of genus $n-1=12k+10$. The curve $C_{f,3}$ admits commuting periodic
automorphisms
$$\delta_2:(x,y)\mapsto (-x,-y)$$
and
$$\delta_3:(x,y)\mapsto (x,\gamma y)$$
of period $2$ and $3$ respectively.

The regular map of curves
$$\pi:C_{f,3}\to \P^1, \ (x,y)\mapsto x$$
has degree $3$ and ramifies exactly at $0$, the $2m$-element set set
$\{\alpha\mid \alpha\in \RR_f\}$ and $\infty$. (Notice that $3$ does {\sl not}
divide $2m+1$.) Clearly, all  branch points of $\pi$ in $C_{f,3}$ are
$\delta_3$-invariant. By abuse of notation, we denote $\pi^{-1}(\infty)$ by
$\infty$. Let us put
$$B=\pi^{-1}(\RR_f)=\{(\alpha,0)\mid \alpha\in \RR_f\}\subset C(\bar{K}).$$
Clearly, all elements of $B$ are $\delta_3$-invariant. On the other hand, if
$P= (\alpha,0)\in B$ then $\delta_2(P)=(-\alpha,0)\in B$.

The automorphism $\delta_2:C_{f,3} \to C_{f,3}$ has exactly two fixed points,
namely, $\pi^{-1}(0)$ and $\pi^{-1}(\infty)$. One may easily check that the
quotient $\tilde{C}_{f,3}=C_{f,3}/(1,\delta_2)$ is a smooth (irreducible)
projective curve (compare with Lemma 1.2, its proof and Corollary 1.3 in
\cite{ZarhinMF}) and $C_{f,3}\to \tilde{C}_{f,3}$ is a double covering that is
ramified at exactly two points, namely the images of $\pi^{-1}(0)$ and
$\pi^{-1}(\infty)$. The Hurwitz formula implies that the genus of
$\tilde{C}_{f,3}$ is $m$.

Since
$$[n/3]=4k+3, \  [2n/3]=8k+7,$$
it follows from (\cite{ZarhinCrelle}, \cite[Remarks 3.5 and 3.7]{ZarhinCamb}) that the $(n-1)$-dimensional
$\bar{K}$-vector space $\Omega^1(C_{f,3})$ of differentials of the first kind
on $C_{f,3}$ has a basis
$$\left\{ x^i \frac{dx}{y}, \ 0\le i \le 4k+2; \ x^j \frac{dx}{y^2}, \ 0\le j \le
8k+6 \right\}.$$ If
$$\delta_2^{*}:\Omega^1(C_{f,3})\to \Omega^1(C_{f,3}), \ \delta_3^{*}:\Omega^1(C_{f,3})\to
\Omega^1(C_{f,3})$$ are the automorphisms induced by $\delta_2$ and $\delta_3$
respectively then
$$\delta_3^{*}(x^i \frac{dx}{y})=\gamma^{-1}x^i \frac{dx}{y}, \ \delta_3^{*}(x^j
\frac{dx}{y^2})=\gamma^{-2}x^j \frac{dx}{y^2}=\gamma x^j \frac{dx}{y^2},$$
$$\delta_2^{*}(x^i \frac{dx}{y})=(-1)^i x^i \frac{dx}{y}, \ \delta_2^{*}(x^j
\frac{dx}{y^2})=(-1)^{j+1}x^j \frac{dx}{y^2};$$ in particular, the
 basis consists of eigenvectors with respect to $\delta_2^{*}$ and $\delta_3^{*}$.
It follows that the subspace $\Omega^1(C_{f,3})^{-}$ of
$\delta_2$-anti-invariants is $m$-dimensional and admits a basis
$$\left\{ x^{2i+1} \frac{dx}{y}, \ 0\le i \le 2k; \ x^{2j} \frac{dx}{y^2} \ 0\le j \le
4k+3 \right\}.$$
\end{sect}

\begin{sect}
{\bf Trigonal jacobians}. Let $J(C_{f,3})$ be the jacobian of $C_{f,3}$: it is
a $(n-1)$-dimensional abelian variety that is defined over $K$. By Albanese
functoriality, $\delta_2$ and $\delta_3$ induce the $K$-automorphisms of
 $J(C_{f,3})$ that we still denote by $\delta_2$ and $\delta_3$ respectively.
 We have
 $$\delta_2^2=1, \ \delta_3^2+\delta_3+1=0$$
 where all the equalities hold in $\End(J(C_{f,3}))$.
 The latter equality gives rise to the embedding
 $$\Z[\zeta_3] \hookrightarrow \End_K(J(C_{f,3})), \ \zeta_3 \mapsto
 \delta_3$$
 of the cyclotomic ring $\Z[\zeta_3]$ into the  ring of $K$-endomorphisms of $J(C_{f,3})$.

 Let $j:C_{f,3} \hookrightarrow J(C_{f,3})$ be canonical embedding of $C_{f,3}$ into
 its jacobian normalized by the condition $j(\infty)=0$, i.e., $j$ sends a point
 $P \in C_{f,3}(\bar{K})$ to the linear equivalence class of the divisor
 $(P)-(\infty)$. Clearly, $j$ is $\delta_3$-equivariant and
 $\delta_2$-equivariant.

 Let me remind the description of the Galois (sub)module $J(C_{f,3})^{\delta_3}$ of
 $\delta_3$-invariants in $J(C_{f,3})(\bar{K})$.  The Galois modules
$V_{{f}}$ and $J(C_{f,3})^{\delta_3}$ are canonically isomorphic
\cite{Schaefer} (see also \cite{ZarhinMiami}). Namely, let
$$\Z_B^0=\{\sum_{P\in B}a_P(P)\mid a_P\in \Z, \ \sum_{P\in B}a_P=0 \}$$
be the group of degree zero divisors on $C_{f,3}$ with support in $B$. The free
commutative group $\Z_B^0$ carries the natural structure of Galois module.
Clearly, the Galois module $\Z_B^0/3 \Z_B^0$ is canonically isomorphic to
$V_{{f}}$: a divisor $\sum_{P\in B}a_P(P)$ gives rise tho the function $\alpha
\mapsto a_P \mod 3$ where $P=(\alpha,0)$. Since $B$ is $\delta_2$-invariant,
the map
$$D_2: \sum_{P\in B}a_P(P)\mapsto \sum_{P\in B}a_P(\delta_2 P)$$
is an automorphism of of the Galois module $\Z_B^0$ that induces the
automorphism of $\Z_B^0/3 \Z_B^0=V_{{f}}$ that sends a function $\alpha \to
\phi(\alpha)$ to the function $\alpha \to \phi(-\alpha)$. (We still denote this
automorphism of $V_{{f}}$ by $D_2$.)  Notice that
$$V_{{f}}^{-}=(1-D_2)V_{{f}}, \ V_{{f}}^{+}=(1+D_2)V_{{f}}$$
(recall that $V_f$ is the $\F_3$-vector space.) In other words, $V_{{f}}^{+}$
and $V_{{f}}^{-}$ are eigenspaces of $D_2$ that correspond to eigenvalues $1$
and $-1$ respectively.

Let us consider the natural map
$$\cl: \Z_B^0 \to J(C_{f,3})(\bar{K})$$
that sends a divisor $\sum_{P\in B}a_P(P)$ to (its linear equivalence class,
i.e., to) $$\sum_{P\in B}a_P j(P) \in J(C_{f,3})(\bar{K}).$$ It turns out that
$\cl(\Z_B^0)=J(C_{f,3})^{\delta_3}$ and the kernel of $\cl$ coincides with
$3\cdot \Z_B^0$. This gives rise to the natural isomorphism of Galois module
$\Z_B^0/3 \Z_B^0$ and $J(C_{f,3})^{\delta_3}$ and we get the natural
isomorphisms of Galois modules
$$\overline{\cl}: V_{{f}}=\Z_B^0/3 \Z_B^0\cong J(C_{f,3})^{\delta_3}.$$
Since $\delta_2$ commutes with $\delta_3$, the Galois submodule
$J(C_{f,3})^{\delta_3}$ is $\delta_2$-invariant. It follows from the explicit
description of $\cl$ and $D_2$ that if $\bar{\cl}(\phi)=P \in
J(C_{f,3})^{\delta_3}$ then $\delta_2 P$ is the image (under $\overline{\cl}$)
of the function $\alpha \to \phi(-\alpha)$. In other words,
$$\overline{\cl}(D_2 \phi)=\delta_2 \overline{\cl}(\phi) \ \forall \phi \in
V_f.$$ It follows that the restriction of $\overline{\cl}$ to $V_{{f}}^{-}$
gives us the isomorphism of Galois modules
$$\overline{\cl}: V_{{f}}^{-} \cong \{P \in J(C_{f,3})^{\delta_3}\mid
\delta_2 P=-P\}.$$ This implies that
$$\{P \in J(C_{f,3})^{\delta_3}\mid
\delta_2 P=-P\}=\overline{\cl}(V_{f}^{-})=\overline{\cl}((1-D_2)V_{f})
 =(1-\delta_2)J(C_{f,3})^{\delta_3}.$$
\end{sect}

\begin{sect}
\label{Triprym} {\bf Trigonal prymians}.
 Let us consider the Prym variety
 $$P(C_{f,3})=(1-\delta_2)J(C_{f,3})\subset J(C_{f,3}).$$
 If one restrict the canonical principal polarization on $J(C_{f,3})$ to
$P(C_{f,3})$ then the induced polarization is twice a principal polarization on
$P(C_{f,3})$ \cite[Sect. 3, Cor. 2]{MumfordP}.
%%%%%%%%%%%%%%%%%%%%%%%%%%%%%%%%%%
Obviously, the principal polarization on
$P(C_{f,3})$ is $\delta_3$-invariant.
%%%%%%%%%%%%%%%%%%%%%%%%%%%%%%%%%%%%%%%%%
It is also clear \cite[Sect. 3,
Cor. 2]{MumfordP} that $P(C_{f,3})$ coincides with the identity component of
the surjective map of jacobians $J(C_{f,3})\to J(\tilde{C}_{f,3})$; in
particular, it is a $m$-dimensional abelian variety that is defined over $K$.
Clearly, the abelian subvariety $P(C_{f,3})$ is $\delta_3$-invariant. Therefore
we may and will consider $\delta_3$ as the $K$-automorphism of $P(C_{f,3})$. Still
$\delta_3^2+\delta_3+1=0$ in $\End(P(C_{f,3}))$. As above, this induces an
embedding $$\Z[\zeta_3] \hookrightarrow \End(P(C_{f,3})), \ \zeta_3 \mapsto
 \delta_3.$$
On the other hand, $1+\delta_2$ kills $P(C_{f,3})$, because
$$0=1-\delta_2^2=(1+\delta_2)(1-\delta_2)\in \End(J(C_{f,3}))$$ and
$P(C_{f,3})(\bar{K})=(1-\delta_2)(J(C_{f,3}))$. This implies that
$$\delta_2 P = -P \ \forall P \in P(C_{f,3})(\bar{K}).$$
Let us consider the Galois (sub)module $P(C_{f,3})^{\delta_3}$ of
$\delta_3$-invariants in  $P(C_{f,3})(\bar{K})$. Clearly,
$$P(C_{f,3})^{\delta_3}\subset \{P \in J(C_{f,3})^{\delta_3}\mid
\delta_2 P=-P\}.$$ Since the latter group coincides with
$(1-\delta_2)J(C_{f,3})^{\delta_3}$, we conclude that
$$P(C_{f,3})^{\delta_3}= \{P \in J(C_{f,3})^{\delta_3}\mid
\delta_2 P=-P\}.$$
It follows that the Galois modules $P(C_{f,3})^{\delta_3}$
and $V_f^{-}$ are canonically isomorphic.

Let us put $$\Oc=\Z[\zeta_3], \ \lambda=(1-\zeta_3)\Oc, \
E=\Oc\otimes\Q=\Q(\zeta_3)=\Q(\sqrt{-3}).$$ Then the residue field
$$\Oc/\lambda=\F_3.$$  Recall that we have the natural homomorphism
$$\Oc=\Z[\zeta_3]\hookrightarrow \End_K(P(C_{f,3})), \zeta_3\mapsto \delta_3.$$
This implies that
$$P(C_{f,3})^{\delta_3}=P(C_{f,3})_{\lambda}$$
and therefore the Galois modules  $P(C_{f,3})_{\lambda}$ and $V_{f}^{-}$ are
canonically isomorphic. In particular,
$$\dim_{\F_3}(P(C_{f,3})_{\lambda})=m.$$
On the other hand, it is well known \cite{Shimura,Ribet2,ZarhinMZ} that $P(C_{f,3})_{\lambda}$ is a free
$\Oc/\lambda$-module of rank $2\dim(P(C_{f,3}))/[E:\Q]$. Since
$\Oc/\lambda=\F_3$ and $[E:\Q]=2$, we get another proof of the equality $\dim(P(C_{f,3}))=m$.
Notice that
 $$\dim(P(C_{f,3}))=m=\dim_{\F_3}(\Omega^1(C_{f,3})^{-}).$$
\end{sect}

\begin{rem}
\label{multprime}
 Taking into account that $\dim(P(C_{f,3}))=m$ and applying Theorem 3.10 of \cite{ZarhinMiami} to
$$ Y=J(C_{f,3}), Z=P(C_{f,3}), \delta=\delta_2, P(t)=1-t,$$
we obtain that $1-\delta_2:J(C_{f,3})\twoheadrightarrow P(C_{f,3})\subset
J(C_{f,3})$ induces (on differentials of the first kind) an isomorphism
$$(1-\delta_2)^{*}:\Omega^1(P(C_{f,3}))\cong \Omega^1(C_{f,3})^{-}\subset
\Omega^1(J(C_{f,3}))$$
 and this isomorphism is $\delta_3$-equivariant. It
follows easily that $\delta_3$ induces a linear operator in
$\Omega^1(P(C_{f,3}))$, whose spectrum consists of eigenvalues $\gamma^{-1}$ of
multiplicity $2k+1$ and $\gamma=\gamma^{-2}$ of multiplicity $4k+4$. Clearly,
the numbers $2k+1$ and $4k+4$ are {\sl relatively prime}.
\end{rem}

\begin{thm}
\label{main}
 Assume that there exists a doubly transitive permutation group $\GG \subset
 \ST_m$ that enjoys the following properties:

 \begin{itemize}
 \item[(i)]
 $\GG$ does not contain a normal subgroup, whose index divides $m$ (except $\GG$
 itself).
\item[(ii)] $\Gal(h)$ contains $2^{m-1}\cdot \GG$.
 \end{itemize}
 Then $\End(P(C_{f,3}))=\Z[\zeta_3]$.
 In particular, $P(C_{f,3})$ is an absolutely simple abelian variety.
\end{thm}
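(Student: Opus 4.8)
The plan is to show that $\End^0(P(C_{f,3})) = E = \Q(\zeta_3)$; since $\Z[\zeta_3]$ is the maximal order in $E$ and already sits inside $\End(P(C_{f,3}))$ with $\Z[\zeta_3]$ integrally closed, this forces $\End(P(C_{f,3})) = \Z[\zeta_3]$, and then $P(C_{f,3})$ is absolutely simple because a non-simple abelian variety or one isogenous to a power would have an endomorphism algebra strictly larger than the field $E$. So everything reduces to bounding $\End^0(P(C_{f,3}))$. Write $X = P(C_{f,3})$, $d = \dim X = m$, and note $E$ acts on $X$ with $2d/[E:\Q] = m$, so $X$ is an abelian variety of ``$E$-dimension'' $m$ with $m$ odd. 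The centralizer $\End^0_E(X)$ of $E$ in $\End^0(X)$ is a semisimple $E$-algebra, and the game is to show it equals $E$ itself.

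The key leverage is the Galois module $X_\lambda = P(C_{f,3})_\lambda \cong V_f^{-}$, together with Lemma \ref{centralizer}, which gives $\End_{\Gal(K)}(V_f^{-}) = \F_3 = \Oc/\lambda$ under hypothesis (ii) (applied with $\GG$ doubly transitive). First I would pass from the mod-$\lambda$ information to the $\ell$-adic or rational endomorphism algebra. The standard mechanism (as in \cite{ZarhinMZ}, \cite{ZarhinMiami}) is: reduction of $\End_E^0(X)$ mod $\lambda$ injects into $\End_{\Gal(K)}(X_\lambda)$ as an $\Oc/\lambda$-algebra, so $\End_{\Gal(K)}(X_\lambda) = \F_3$ would immediately give that $\End_E^0(X) \otimes \text{something}$ is ``small''. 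More precisely, one shows that if $X_\lambda$ is an absolutely simple Galois module with scalar endomorphisms $\F_3$, and the image of $\Gal(K)$ in $\Aut_{\F_3}(X_\lambda)$ has no normal subgroup of index dividing $m$ (apart from itself) — which is exactly where hypothesis (i) on $\GG$ and Remark \ref{orbits}(ii) enter, via $\kappa_h$ pulling back to $\Gal(h)$ — then $X$ cannot acquire extra endomorphisms: any endomorphism subring would force a proper normal subgroup of the image group of forbidden index. This ``no-small-normal-subgroup'' plus ``scalar endomorphisms of the reduction'' package is precisely the hypothesis of the main theorems of \cite{ZarhinMZ, ZarhinMiami}, so the bulk of the argument is checking that the group-theoretic hypotheses there are met: irreducibility of $V_f^{-}$ (from double transitivity, via the 3-orbit computation in Remark \ref{orbits}(iii)(2) and Lemma \ref{centralizer}), and the index condition (from hypothesis (i) combined with Remark \ref{orbits}(ii)).

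There is one genuinely extra ingredient beyond invoking those theorems: one must rule out the possibility that $X$ is isogenous to a power of an abelian variety or to an abelian variety with complex multiplication by a field larger than $E$ or by a quaternion algebra — i.e., rule out that $\End_E^0(X)$ is a matrix algebra or a division algebra of dimension $>1$ over $E$. Here the parity/coprimality facts are decisive: by Remark \ref{multprime}, $\delta_3$ acts on $\Omega^1(X)$ with eigenvalue multiplicities $2k+1$ and $4k+4$, which are \emph{coprime} (and in particular $X$ is not of ``$E$-dimension'' realizable as $r^2$ for $r>1$, and the CM-type is non-degenerate in the relevant sense). The standard argument is: if $\End_E^0(X)$ were a central simple $E$-algebra of degree $r^2$ over $E$, then $r \mid m = 2d/[E:\Q]$ and also $X$ would be $E$-isogenous to $Y^r$ for some $Y$, forcing $r$ to divide both multiplicities $2k+1$ and $4k+4$ of the $\delta_3$-eigenspaces — hence $r=1$. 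If $\End_E^0(X)$ were a division algebra with center a field $E' \supsetneq E$, a similar degree count using $m$ odd and the coprime multiplicities gives a contradiction.

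The main obstacle I expect is the first step: cleanly transferring the statement ``$\End_{\Gal(K)}(V_f^{-}) = \F_3$'' to a statement about $\End^0(X)$ without circularity, and correctly combining it with the index hypothesis (i). The subtlety is that knowing the endomorphisms of the mod-$\lambda$ reduction are scalar does not by itself bound $\End^0(X)$ — one needs the reduction map $\End_E(X)/\lambda\End_E(X) \hookrightarrow \End_{\Gal(K)}(X_\lambda)$ to be close to surjective or, more realistically, one needs the group-theoretic rigidity (no normal subgroup of index dividing $m$ in the image) to propagate an inert-prime argument: a prime $\lambda$ of $E$ at which $X$ has ``scalar'' Galois action on the $\lambda$-torsion, combined with the minimality of the image group, forces $\End_E^0(X)$ to be unramified and commutative at $\lambda$, and then the global structure theorems pin it down to $E$. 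Packaging this correctly — and making sure the hypotheses of \cite{ZarhinMZ, ZarhinMiami} are literally the ones verified above — is the delicate part; the rest is bookkeeping with the explicit data of Section \ref{geom}.
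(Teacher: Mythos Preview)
Your proposal is correct and follows essentially the same route as the paper: reduce to showing $\End_{\Gal(K)}(P(C_{f,3})_{\lambda})=\F_3$ via Lemma~\ref{centralizer} and the identification $P(C_{f,3})_{\lambda}\cong V_f^{-}$, then feed this together with the coprimality of the eigenvalue multiplicities (Remark~\ref{multprime}) and the index hypothesis~(i) into the general machinery of \cite{ZarhinMZ}. The paper's proof is simply a terse invocation of Theorem~3.12(ii)(2) of \cite{ZarhinMZ}; your longer discussion of matrix algebras, larger division algebras, and the role of the normal-subgroup condition is an unpacking of what that cited theorem does. One small point you glossed over: Lemma~\ref{centralizer} requires $\Gal(h)=2^{m-1}\cdot\GG$, not merely containment, so one first replaces $K$ by the fixed field of $2^{m-1}\cdot\GG$ to force equality (this is harmless since $\End(P(C_{f,3}))$ is already computed over $\bar{K}$).
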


\begin{proof}
Enlarging $K$ if necessary, we may and will assume that $\Gal(h)= 2^{m-1}\cdot
\GG$. Identifying $\Perm(\RR_h)$ with the stabilizer of $0$ in $\Perm(\RR_f)$,
we obtain that
$$\Gal(f)=\Gal(h)=2^{m-1}\cdot\GG.$$
Since the Galois modules  $P(C_{f,3})_{\lambda}$ and $V_f^{-}$ are isomorphic,
it follows from Lemma \ref{centralizer} that
$\End_{\Gal(K)}(P(C_{f,3})_{\lambda})=\F_3$.
 Now Theorem \ref{main} follows from  Remark \ref{multprime} and Theorem
3.12(ii)(2) of \cite{ZarhinMZ} applied to $X=P(C_{f,3}), E=\Q(\sqrt{-3}),
\Oc=\Z[\zeta_3], \lambda=(1-\zeta_3)\Oc$.
\end{proof}

\begin{ex}
Let $m=6k+5$ be a positive integer that is congruent to $5$ modulo $6$.
%Let $\GG\subset \ST_m$ be a doubly transitive subgroup that does not contain a normal subgroup,
%whose index divides $m$ except $\GG$ itself. (E.g., $\GG=\ST_m$ or the alternating group $\A_m$.)
  Let $L$ be the field of rational functions $\C(t_1, \dots , t_m)$ in $m$ independent
 variables $t_1, \dots , t_m$ over $\C$.
 One may realized $2^{m}\cdot\ST_m$ as the following  group of (linear) automorphisms of $L$:
 $$(s; \epsilon_1, \dots , \epsilon_m):
t_i \mapsto \epsilon_i t_{s(i)}, \ i=1, \dots m$$
 where $$s \in \ST_m, \ \epsilon_i=\pm 1 .$$
 Let $K$ be the subfield of $2^{m}\cdot\ST_m$-invariants in $L$. Clearly, $L/K$ is a finite
 Galois extension with Galois group $2^{m}\cdot\ST_m$. In particular, $\bar{L}=\bar{K}$.
Since $m \ge 5$, the only normal subgroups in $\ST_m$ are the subgroup $\{1\}$ of even index
$m!$, the alternating (sub)group $\A_m$ of index $2$ and $\ST_m$ itself.

 The  even degree $2m$ polynomial
 $$h(x) =\prod_{i=1}^m (x^2-t_i^2)=\prod_{i=1}^m (x-t_i) \prod_{i=1}^m (x+t_i)$$
 lies in $K[x]$ and its splitting field coincides
 with $L$. It follows that $\Gal(h)=2^{m}\cdot\ST_m$.  Applying Theorem \ref{main} to the odd
 degree $(2m+1)$ polynomial
$$f(x):= x \cdot h(x)=x \cdot \prod_{i=1}^m (x^2-t_i^2),$$
 we conclude that the
 endomorphism ring (over $\bar{L}$) of the $m$-dimensional prymian $P(C_{f,3})$ coincides
  with $\Z[\zeta_3]$.
\end{ex}

\begin{proof}[Proof of Theorem \ref{main0}]
The assertions (i) (except (i)(1)(C) ) and (ii)(a) are already proven in Subsection \ref{Triprym} and
Remark \ref{multprime}.  Since $\ST_m$ is the doubly transitive permutation
group that does not contain normal subgroups of odd index (except $\ST_m$
itself) and $\W(\D_m)=2^{m-1}\cdot \ST_m$, the assertion (ii)(b1) follows from
Theorem \ref{main} applied to $\GG=\ST_m$.

In order to prove the assertion (i)(1)(C), notice that
$$3\cdot \mid (2k+1)-(4k+4)\mid =6k+9>(6K+5)+2=m+2=\dim(P(C_{f,3}))+2.$$
Now the assertion i)(1)(C)  follows from the assertion (i)(2) combined with the
Theorem 1.1 of \cite{ZarhinMF}.

In order to prove the assertion (ii)(b2), notice that we already know (thanks
to the assertion (ii)(b1) ) that $\End(P(C_{f,3}))=\Z[\delta_3]\cong\Z[\zeta_3]$.
This implies that $P(C_{f,3})$ is absolutely simple and has exactly one
principal polarization, which is $\delta_3$-invariant. So, if $P(C_{f,3})$ is
isomorphic to the jacobian of a smooth curve then this isomorphism does respect
the principal polarizations. Now the assertion (ii)(b2) follows from the assertion (i)(1)(C).

%Notice that
%$$3\cdot \mid (2k+1)-(4k+4)\mid =6k+9>(6K+5)+2=m+2=\dim(P(C_{f,3}))+2.$$
%Now the assertion (ii)(b2) follows from the assertion (i)(2) combined with the
%Theorem 1.1 of \cite{ZarhinMF}.
\end{proof}

\end{document}